\documentclass[%12pt,
a4paper,%dvipdfm,
eqnright, reqno]{amsart}
\usepackage{graphicx}
\usepackage{hyperref}
\usepackage{color}
\usepackage{amsmath}
\definecolor{black}{gray}{0}
\definecolor{ttgray}{gray}{0.5}
\definecolor{bfred}{rgb}{0.4,0,0}
\definecolor{emgreen}{rgb}{0,0.3,0}
\definecolor{emmagenta}{rgb}{0.6,0.07,0.07}
\definecolor{sfgb}{rgb}{0,0.3,0.3}
\definecolor{mathblue}{rgb}{0,0,0.4}
\usepackage{myamsart}
\usepackage[opta]{optional}

\newtheorem{Problem}[thm]{Problem}

\title[Relationship between a Wiener--Hopf
  and a Riemann--Hilbert problem]{The relationship between a strip Wiener--Hopf problem
  and a line Riemann--Hilbert problem}
\author{Anastasia V. Kisil}    
\address{Cambridge Centre of Analysis, University of Cambridge,
  Wilberforce Road, Cambridge, CB3 0WA, UK }
\email{a.kisil@maths.cam.ac.uk}
 
%\\ Supervisor: Prof Peake}
%\date{9th Nov 2012 \\ \texttt{a.kisil@maths.cam.uk}}
\begin{document}

 \begin{abstract}
   In this paper the Wiener--Hopf factorisation problem is presented
   in a unified framework with the Riemann--Hilbert
   factorisation. This allows to establish the exact relationship
   between the two types of factorisation. In particular, in the
   Wiener--Hopf problem one assumes more regularity than for the
   Riemann--Hilbert problem. It is shown that Wiener--Hopf factorisation can be
   obtained using Riemann--Hilbert factorisation on certain lines.
\end{abstract} 

\keywords{Wiener--Hopf,  Riemann-Hilbert, Integral Equations}

\maketitle

%\newpage
%\tableofcontents
%\newpage

\section{Introduction} 

The Wiener--Hopf and the Riemann--Hilbert problems are a subject of many
books and articles \cites{Ehr_spit,
  Ehr_Spec,Constructive_review,Spit,Camara_2}. The similarities of the two
techniques are easily visible and have been noted in many
places. Nevertheless, to the author's knowledge there was no
systematic study of the exact relationship of the two methods. To fill
this gap is the purpose of this article.

It has been suggested in~\cite[Chapter~4.2]{bookWH} that the
Wiener--Hopf equation are a special case of a Riemann--Hilbert
equation.  Specifically, the Riemann--Hilbert problem connects
boundary values of two analytic functions on a contour and the
Wiener--Hopf equation is defined on the strip of common analyticity of
two functions. In the simplest case, both methods use the key concept
of functions analytic in half-planes. The additional regularity for
the Wiener--Hopf equation allows to express the solution in more
simple terms than the Riemann--Hilbert equation.  To be well-defined
the Riemann--Hilbert problem requires some additional regularity,
e.g. the coefficients need to be H\"{o}lder continuous on the contour.

In a different book \cite[Chapter~14.4]{Ga-Che} it has been stated
that the Wiener--Hopf equations results from a bad choice of functions
spaces and instead a Riemann--Hilbert equations should be
considered. Confusingly, those Riemann--Hilbert equations are
sometimes referred to as a Wiener--Hopf equations. Historically, there
has been insufficient interaction between the communities using the
Wiener--Hopf and the Riemann--Hilbert methods, this results in obscuring
disagreements in terminology and notations. Such differences if not
reconciled properly have a tendency to widen.

We consider the following problem as an illustration. Given a
function \(F(t)\) on the real axis:
\begin{equation}
  \label{eq:F-example}
  F(t)=\sqrt{\frac{t^2-(\frac{1}{2}i+1)t-\frac{1}{4}i+\frac{3}{4}}
    {t^2-\frac{3}{2}it+1}}\frac{(t-i+2)}{(t+i+\frac{3}{2})},  
\end{equation}
find two factors \(F^+(t)\) and \(F^-(t)\), which have analytic
extensions in the upper and lower half-planes respectively. In this
rare case the factorisation can be obtained by inspection
\begin{align}
  F(t)&= \left(\sqrt{\frac{(t+\frac{1}{2}i-\frac{1}{2})}{(t+\frac{1}{2}i)}}
    \frac{1}{(t+i+\frac{3}{2})} \right)\times
  \left(\sqrt{\frac{(t-i-\frac{1}{2})}{(t-2i)}}(t-i+2)\right)
  \nonumber \\
  \label{eq:=f+tf-t-}&=F^+(t)F^-(t). 
\end{align}
These functions are depicted in Figure~\ref{fig:vis}. We will comment on
this example in both (the Riemann--Hilbert and Wiener--Hopf)
frameworks at the end of this paper.

\begin{figure}[htbp]
\includegraphics[scale=0.6,angle=0]{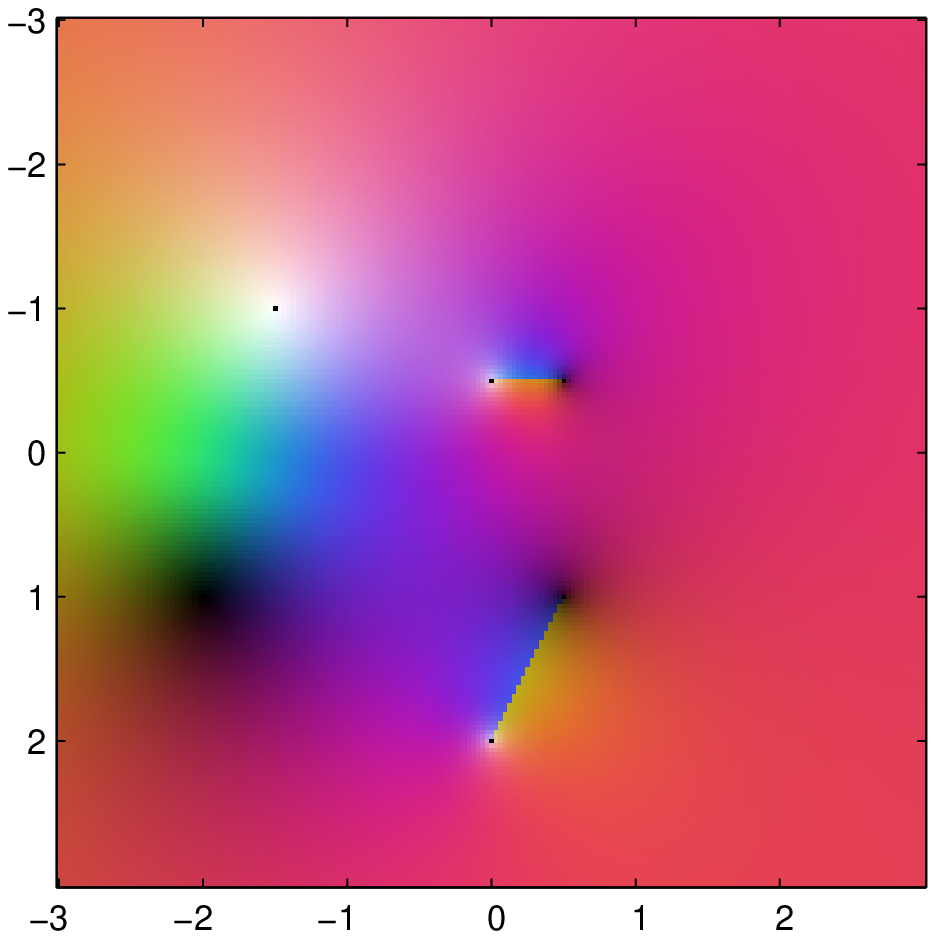}
\includegraphics[scale=0.6,angle=0]{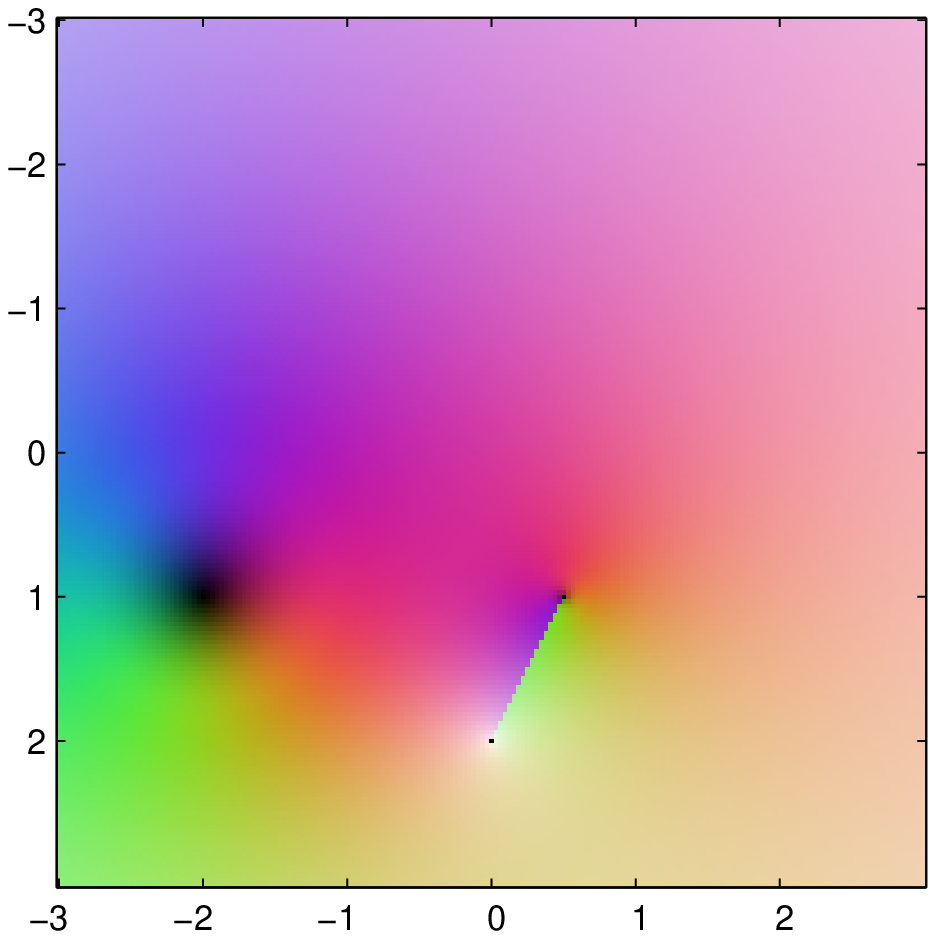}
\includegraphics[scale=0.6,angle=0]{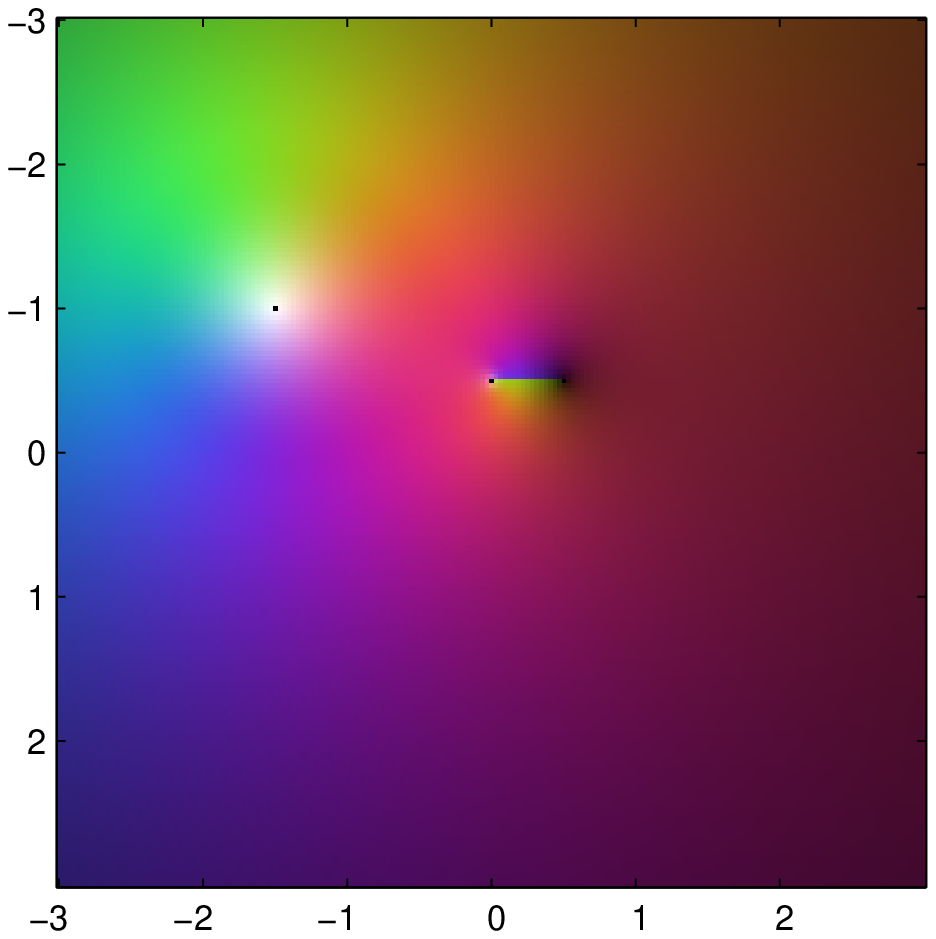} 
  
\caption{A full colour image of functions~\eqref{eq:=f+tf-t-}. The top
  picture shows \(F(z)\) followed by \(F^+(z)\) and \(F^-(z)\). We use
  a colour scheme developed by John Richardson. {\color{red} Red} is
  real, {\color{blue} blue} is positive imaginary, {\color{green}
    green} is negative imaginary, black is small magnitude and white
  is large magnitude.  Branch cuts appear as colour discontinuities.
  Produced using MATLAB package \texttt{zviz.m}. }
 \label{fig:vis} 
\end{figure}

The structure of the paper is as
follows. Section~\ref{sec:Preliminaries} collects some
preliminaries. In Section~\ref{sec:Wiener--Hopf Method}, the usual
theory\opt{optx}{ and applications} of the Wiener--Hopf equation is
recalled. It is presented to highlight the differences with the theory of
the Riemann--Hilbert equations, Section~\ref{sec:R-H}.  Section~\ref{sec:relat-betw-wien} describes at the
relationship between the two methods in the context of integral
equations. In Section~\ref{sec:R-Hv.s} the solution to
Wiener--Hopf and Riemann--Hilbert factorisation is re-expressed in
terms of the Fourier transforms instead of the Cauchy type
integrals. This allows to prove theorems about the exact relationship
of Wiener--Hopf and Riemann--Hilbert factorisation.\opt{optx}{ The
  last section gives some interesting historical remarks.}

\section{Preliminaries} 
\label{sec:Preliminaries}

This section will review important properties of the Fourier transform,
the Cauchy type integral and the relationship between the two. We also states
the generalised version of Liouville's Theorem
together with analytic continuation, in the form which will be used later.

The \emph{Fourier transform} of a function \(f(t)\)  is defined as
\begin{equation}
  \label{eq:fourier}
  F(x)=\frac{1}{\sqrt{2\pi}} \int\limits_{-\infty}^{\infty} f(t) e^{ixt} dt,  \quad  -\infty<x< \infty.
\end{equation}
Throughout this paper the Fourier image and original function will be
denoted by the same letter but they will be upper and lower case
respectively.  The inverse Fourier transform is given:
\[f(t)=\frac{1}{\sqrt{2\pi}} \int\limits_{-\infty}^{\infty} F(x) e^{-ixt} dt,  \quad  -\infty<t< \infty.\]
It is an important fact that if \(f(t)\in L_2(\mathbb{R})\) then
\(F(t)\in L_2(\mathbb{R)}\). This makes the space of square integrable
functions very convenient to work in. Moreover, Fourier transform is an
isometry of \(L_2(\mathbb{R)}\) due to Plancherel's theorem.

The Fourier transform need not be confined to the real axis, as long
as the integral~\eqref{eq:fourier} is absolutely convergent for a
complex \(x\). On an open domain consisting of such parameters \(x\),
the Fourier transform is an analytic function. The Paley--Wiener
theorem (see \eqref{eqn:ana1}--\eqref{eqn:ana3} below) gives the
conditions on the decay at infinity of \(f(t)\) to ensure such
analyticity in different strips and half-planes.

%\begin{def}
%\end{def}

Another key concept is the Cauchy type integral. Let \(F(\tau)\) be
integrable on a simple Jordan curve \(L\), the integral
\[\frac{1}{2\pi i} \int_L \frac{F(\tau)}{\tau -z} d\tau,\]
is called a \emph{Cauchy type integral} and defines an analytic
function on the complement of \(L\).  If \(L\) divides
the complex plane into two disjoint open components then it makes sense
to consider two functions \(F^-\) and \(F^+\) on the respective
domains. In particular, for  the real line we use the notation:
\begin{equation}
\label{eq:hilbert}
 \frac{1}{2\pi i} \int\limits_{-\infty}^{\infty} \frac{F(\tau)}{\tau -z} d\tau= \left\{
 \begin{array}{rl}
 F^{+}(z) & \text{if } \mathrm{Im}\,z>0,\\
  F^{-}(z) & \text{if } \mathrm{Im}\,z<0.\\ 
 \end{array} \right.
\end{equation}
The relationship of the Cauchy type integral to the Fourier transform
is outlined below.
\begin{align}
\label{eq:C-F}
\frac{1}{2\pi i} \int\limits_{-\infty}^{\infty} \frac{F(\tau)}{\tau
  -z} d\tau 
&=\frac{1}{\sqrt{2\pi}} \int\limits_{0}^{\infty} f(t) e^{izt} dt\,,
&& \text{if }\mathrm{Im}\,z>0,\\
\label{eq:C-F2}
\frac{1}{2\pi i} \int\limits_{-\infty}^{\infty} \frac{F(\tau)}{\tau -z} d\tau &=-\frac{1}{\sqrt{2\pi}} \int\limits_{-\infty}^{0} f(t) e^{izt} dt\,,&&
\text{if } \mathrm{Im}\,z<0.
\end{align}
The above integrals are the two ways of arriving at  functions analytic in half-planes.
% \begin{thm}
% In order for the function defined on the real axis \(F(x)\in
% L_2(\mathbb{R})\) to be a boundary value of a function analytic in
% the upper (lower) half-planes \\ \(F^{+}(x)\) ( \(F^{-}(x)\)) and with
% the property that
% \[ \int\limits_{-\infty}^{\infty}|F^{\pm}(x+iy)|^2 dx< M,\]
% with \(M\) not dependant on \(y\) it is necessary and sufficient that
% its inverse Fourier transform is zero on the negative (positive) half-line.
% \end{thm}

It was already mentioned that \(L_2(\mathbb{R})\) is a very convenient
with regards to the Fourier transform. The H\"{o}lder continuous
functions produce well-defined boundary values of the Cauchy integral.
Thus, their intersection~\cite{Ga-Che}*{\S~1.2}:
\[ \{\!\{0\}\!\}=L_2(\mathbb{R}) \cap \text{ H\"{o}lder},\] turns out
to be very useful for both the Wiener--Hopf and the Riemann--Hilbert
problems.  The pre-image of \(\{\!\{0\}\!\}\) under the Fourier
transform is denoted \(\{0\}\).

Formulae~\eqref{eq:C-F}--\eqref{eq:C-F2} show the significance of
functions which are zero on the positive or the negative half
lines. Given a function on the real axis we can define the splitting
\begin{equation}
\label{eq:plus}
 f_+(t)= \left\{
 \begin{array}{rl}
 f(t) & \text{if } t>0,\\
  0 & \text{if } t<0,\\ 
 \end{array} \right. 
\quad
 f_-(t)= \left\{
 \begin{array}{rl}
 0 & \text{if } t>0,\\
 -f(t) & \text{if } t<0.\\ 
 \end{array} \right.
\end{equation}
We will say that if \(f \in\{0\}\) then \(f_+(t)\in\{0,\infty\}\) and \(f_-(t)\in\{-\infty,0\}\).

In the rest of the paper we will need to refer to functions that are
analytic on strips or (shifted) half-planes. % Below, convenient
% notation and the relation to the functions prior to the Fourier
% transform are described.
Following~\cite{Ga-Che}*{\S~13}, we define
\(f\in \{a\}\) if \(e^{-ax}f\in \{0\}\), that is a shift in the Fourier
space. Finally, \(f=f_++f_-\in \{a,b\}\) if \(f_+\in \{a\}\) and
\(f_-\in \{b\}\). From the definition of \(f_+\) and \(f_-\) it is
clear that also \(f_+\in \{a, \infty\}\) and \(f_-\in \{-\infty,b\}\).

The Fourier transform of functions in the class \(\{a,b\}\) is denoted
\(\{\!\{a,b\}\!\}\). The celebrated Paley--Wiener theorem states that
the following is equivalent :
\begin{eqnarray}
\label{eqn:ana1}
F_+(z) \text{ analytic in Im}z>a &\iff& F_+(z)\in\{\!\{a,\infty\}\!\},\\
\label{eqn:ana2}
F_-(z) \text{ analytic in Im}z<b &\iff& F_-(z)\in\{\!\{-\infty,b\}\!\},\\
\label{eqn:ana3}
F(z) \text{ analytic in } a<\mathrm{Im}\,z<b &\iff& F(z)\in\{\!\{a,b\}\!\}.
\end{eqnarray}

Recall, a convolution of two functions on the real  line is given by
\begin{equation}
  \label{eq:convolution}
  h(t)=\frac{1}{2\pi i} \int\limits_{-\infty}^{\infty} f(t-s)g(s)\,ds. 
\end{equation}
One of the key properties, which make Fourier techniques useful in
integral equations, is that the Fourier
transform of the convolution is the product of the Fourier transform
of the functions \(f(t)\) and \(g(t)\), i.e.  \(H(x)=F(x)G(x)\).

If \(g\in\{a,b\}\) and \(f\in\{\alpha,\beta\}\) (\(a<b\),
\(\alpha<\beta\)) then \(h(x)\) will be in the class \(\{\text{max}(a,
\alpha), \text{min}(b, \beta) \}\)~\cite{Ga-Che}*{\S~1.3} provided
\[\text{max}(a, \alpha)< \text{min}(b, \beta).\]
After taking the Fourier transform this will become:
\[H(z)=F(z) G(z),\]
and \(H(z) \in\{\!\{\text{max}(a, \alpha), \text{min}(b, \beta)\}\!\} \)
i.e analytic in the strip  
\begin{equation}
\label{eq:strip}
\text{max}(a, \alpha)<\mathrm{Im}\,(z)< \text{min}(b, \beta).
\end{equation}

We will need  a generalised version of Liouville's Theorem
together with analytic continuation: 

\begin{thm}[\cite{Ga-Che}*{\S~3.1}]
  \label{th:generalised-liouville}
  If functions \(F_1(z)\), \(F_2(z)\) are analytic in the upper and lower
  half-planes respectively with exception of \(z_0=\infty,\) \(z_k\)
  (\(k=1, 2 \dots n\)) where they have poles with principal parts:
  \begin{eqnarray}
    G_0(z)&=&c_1^0z+ \dots c_{m_0}^0z^m_0,\\
    G_k\left(\frac{1}{z-z_k}\right)&=& \frac{c_1^k}{z-z_k}+ \dots
    \frac{c_{m_k}^k}{(z-z_k)^{m_k}},
  \end{eqnarray}
  with \(F_1(z)\) and \(F_2(z)\) equal on the real axis, then they define a rational function
  on the whole plane:
  \[F(z)=c+G_0(z)+\sum_1^nG_k\left(\frac{1}{z-z_k}\right),\]
  where \(c\) is an arbitrary constant. Poles  \(z_k\) can lie anywhere
  on the half-planes or on the real axis. 
\end{thm}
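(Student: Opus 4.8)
The plan is to reduce the statement to the classical Liouville theorem after two preparatory moves: gluing \(F_1\) and \(F_2\) into a single global meromorphic function, and then subtracting off all the prescribed singular parts.

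First I would glue. Since \(F_1\) and \(F_2\) have the same boundary values on the real axis (away from any \(z_k\) that happens to lie on it), the function
\[
\Phi(z)=\begin{cases} F_1(z), & \mathrm{Im}\,z\ge 0,\\[2pt] F_2(z), & \mathrm{Im}\,z\le 0,\end{cases}
\]
is well defined and continuous on a neighbourhood of every real point other than the \(z_k\), and analytic off the real axis there. A Morera-type argument then upgrades this to analyticity across the axis: integrating \(\Phi\) around an arbitrary small triangle and splitting the contour along \(\mathbb{R}\), the two resulting boundary integrals along the real segment cancel because the boundary values of \(F_1\) and \(F_2\) agree, so the integral over the triangle vanishes. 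Hence \(\Phi\) is analytic on \(\mathbb{C}\setminus\{z_1,\dots,z_n\}\); at each finite \(z_k\) it has an isolated singularity with principal part exactly \(G_k\!\left(\frac{1}{z-z_k}\right)\), and at \(z_0=\infty\) it has a pole with principal part \(G_0(z)\).

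Second I would subtract the singular parts. Put
\[
\Psi(z)=\Phi(z)-G_0(z)-\sum_{k=1}^{n}G_k\!\left(\frac{1}{z-z_k}\right).
\]
Each subtraction removes the singularity at the corresponding \(z_k\), so \(\Psi\) extends to an entire function; subtracting \(G_0\) cancels the polynomial growth at infinity while each \(G_k\!\left(\frac{1}{z-z_k}\right)\) tends to \(0\) there, so \(\Psi(z)\) approaches a finite limit as \(z\to\infty\) and is therefore bounded on all of \(\mathbb{C}\). By the classical Liouville theorem \(\Psi\equiv c\) for some constant \(c\), and rearranging gives the asserted formula. The constant is genuinely arbitrary, since adding any constant to \(F\) alters neither the half-plane domains of analyticity nor any of the principal parts.

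The one point requiring care is the gluing step when some \(z_k\) lies on the real axis: there \(F_1\) and \(F_2\) are not both defined, so the hypothesis "\(F_1\) and \(F_2\) equal on the real axis'' must be read as agreement of boundary values on \(\mathbb{R}\setminus\{z_1,\dots,z_n\}\), and it is precisely the coincidence of the two principal parts at such a \(z_k\) that lets \(\Phi\) be continued to a single-valued function with an isolated pole there. The Morera argument is then run on the real axis minus this finite set, and the isolated singularities at the real \(z_k\) are filled in exactly as in the second step; everything else is routine.
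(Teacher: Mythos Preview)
The paper does not prove this theorem; it is quoted from \cite{Ga-Che}*{\S~3.1} as a preliminary result, so there is no in-paper argument to compare against. Your proof is the standard one and is correct: glue \(F_1\) and \(F_2\) across the real axis via a Morera/Painlev\'e continuation argument to obtain a function meromorphic on the sphere, subtract the prescribed principal parts, and apply the classical Liouville theorem to the resulting bounded entire function.

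One small caveat concerns your last sentence. Once \(F_1\) and \(F_2\) are fixed, the glued function \(\Phi\) is completely determined, and hence so is the constant \(c=\lim_{z\to\infty}\bigl(\Phi(z)-G_0(z)\bigr)\); it is not free to take any value. The phrase ``arbitrary constant'' in the theorem statement is better read as ``some constant not determined by the principal parts alone'' rather than as a genuine degree of freedom in the representation of the given \(F_1,F_2\). Your proof itself is unaffected by this; only the closing remark overstates the situation.
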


\section{The Wiener--Hopf Method}
\label{sec:Wiener--Hopf Method}

In this section the  Wiener--Hopf method is presented in the way it
appears in most papers. This will be revisited to establish the
relationship with the Riemann--Hilbert method. \opt{opta}{For some
  applications of Wiener-Hopf in areas like  elasticity, crack propagation
and acoustics see \cite{Mishuris-Rogosin, owl, Abr_clam, Nigel_cyl}}.  

\opt{optx}{ Wiener--Hopf is an elegant method based on the
  exploitation of the analyticity properties of the functions
  \cite{bookWH}*{Ch. 1}.  The first step of the method is to reduce
  the governing partial differential equation to an ordinary
  differential equation (ODE) via the Fourier transform.  By using the
  ODE and the boundary conditions a Wiener--Hopf equation is
  obtained. The resulting equation is analytic in a strip, which
  mostly contains the real line, and has two unknown functions. Then
  the key step is to obtain a \emph{factorisation} such that the right
  side of the equation is analytic in the upper half-plane and the
  strip, and the left side is analytic in the lower half-plane and the
  strip. Through the use of analytic continuation and Liouville's
  Theorem two separate equations are obtained with one unknown
  function each.  These can now be solved and mapped back to the
  original set-up using the inverse Fourier transform.

The following conventions will be used throughout the section:

\begin{itemize}

\item \emph{A strip} around the real axis (given \(\tau_{-}<0
  <\tau_{+}\)) is \{\(\alpha=\sigma+i\tau\) :
  \(\tau_{-}<\tau<\tau_{+}\)\}\(\subset \mathbb{C}\). \\

\item The subscript \(+\) (or \(-\)) indicates  that the
function is \emph{analytic  in the half-plane} \(\tau>\tau_{-}\) (or \(\tau<\tau_{+}\)). \\

\item  Functions in the Wiener--Hopf equation without the subscript are
  analytic in the strip. \\
\end{itemize}

The Wiener--Hopf problems are recalled below.

The  \emph{multiplicative Wiener--Hopf} problem is: given a function \(K\)
(analytic, zero-free
 and \(K(\alpha) \to 1\) as \( |\sigma| \to \infty\) in the strip) to find functions \(K_+\) and \(K_-\) which satisfy the
following equation in the strip:
\begin{equation}
  \label{eq:mul}
K(\alpha)= K_-(\alpha)K_+(\alpha).
\end{equation}
In addition \(K_-\) and \(K_+\) are required to be:
\begin{itemize}
\item analytic and  non-zero in the respective half-plane. 
\item of \emph{subexponential growth} the respective
  half-planes:
\[|\log K_\pm(\alpha)|=O(|\alpha|^p), \quad p<1,  \quad \text{as} \quad
|\sigma| \to \infty.\]
\end{itemize}
The function \(K(\alpha)\)  in the multiplicative Wiener--Hopf
factorisation will be referred to as  \emph{the kernel}. The functions \(K_+\) and \(K_-\)
will be called \emph{factors} of \(K\).}

\opt{optx}{The multiplicative Wiener--Hopf factorisation can be reduced to the
\emph{additive Wiener--Hopf} problem via application of logarithm. Under the conditions on the
kernel \(K\), let \(f=\log K\) then additive Wiener--Hopf problem is:
given a function  \(f\), find functions \(f_+\) and \(f_-\) which
satisfy the  following equation in the strip:
\begin{equation}
\label{eq:add}
f(\alpha)= f_-(\alpha)+f_+(\alpha).
\end{equation}
Additive and multiplicative problems
are equivalent only in the scalar case. In the matrix Wiener--Hopf
\(K_+\) and \(K_-\) will in general not be  commutative so no such
equivalence is possible.

The existence of solutions to the additive and multiplicative
scalar Wiener--Hopf problem is addressed in Section~\ref{subsec:exact}.}

\opt{optx}{\subsection{The Wiener--Hopf Procedure} 
\label{subsec:Procedure}

The discussion in this section is based on the book by Noble
~\cite{bookWH}. This book is a brilliant introduction to the topic 
and contains numerous examples of the problems which can be solved 
as well as a discussion of the difficulties faced. 

The Wiener--Hopf problem is to find the unknown functions \(\Phi_+\)
and \(\Psi_-\) which satisfy the following equation in \emph{a strip}
around the real axis:
\[A( \alpha)\Phi_+(\alpha)+ \Psi_-(\alpha)+C(\alpha)=0.\] 

The key  step in the Wiener Hopf procedure is to find invertible \(K_+\)
and \(K_-\) (\emph{the Wiener--Hopf factors}) such that:
\[A(\alpha)=K_+(\alpha)K_-(\alpha).\]
 The product of
functions analytic in the
respective half planes will be refereed to as a \emph{factorisation of the
kernel}. 

Then multiplying though:
\[K_+ \Phi_+ ( \alpha)+ K_-^{-1}  \Psi_-(\alpha)+ K_-^{-1}C(\alpha)=0.\]
Now splitting additively:
\[ K_-^{-1}C(\alpha)=C_-(\alpha)+C_+(\alpha).\]
Finally rearranging the equation:
\[K_+ \Phi_+ ( \alpha) +C_+( \alpha)=-K_-^{-1}
\Psi_-(\alpha)-C_-(\alpha).\] The right hand side is a function
analytic in \(\tau>\tau_{-}\) and the left in \(\tau<\tau_{+}\). So by
analytic continuation (from the strip) define a function \(J(\alpha)\)
analytic everywhere in the complex plane. The requirement on the
equation is that there exists an \(n\) such that (\emph{at most
  polynomial growth}):
\[J(\alpha)<|\alpha|^{n} \quad |\alpha| \to \infty.\] Applying the
extended Liouville's theorem to \(J(\alpha)\) implies that
\(J(\alpha)\) is a polynomial of degree less than or equal to
\(n\). So the solution is reduced to finding \(n\) unknown constants
which can be determined in some other way (typically \(n=0\) or
\(1\)). Hence \(\Phi_+ ( \alpha)\) and \(\Psi_-(\alpha)\) are
determined.

%how?

There are a few issues which  determine whether the method will
work or not. The first is finding \(K_+\) and \(K_-\). The second
is to find \(C_-(\alpha)\) and \(C_+(\alpha)\) which will turn out to
be similar to the first part.
Lastly  the resulting function can have at most
polynomial growth. 

\begin{rem}
The equation does not have to hold in  the strip
\(\tau_{-}<\tau<\tau_{+}\) the above method works as long as there
is strip which for \(\sigma\) large enough includes the real
line. Typically the strip is deformed around singularities.
\end{rem}

\begin{rem}
In the same way the following equation could be considered: 

\[A( \alpha)\Phi_+(\alpha)+ B(\alpha) \Psi_-(\alpha)+C(\alpha)=0.\] 

As long as \(B(\alpha)\) is invertible. 
\end{rem}
\begin{rem}
The same works for matrix valued functions provided the factorisation
is found. 
\end{rem}
\opt{opta}{ For details on the fascinating history of
Wiener--Hopf see \cite{history}.}}

%\subsection{The Exact Wiener--Hopf Splitting}
%\label{subsec:exact}

The next theorem provides a constructive existence theorem for the
additive Wiener--Hopf decomposition (in terms of a Cauchy type integral).

\begin{thm}[\cite{bookWH}*{Ch. 1.3}]
  \label{th:split} 
  Let \(f(\alpha)\) be a function of variable \(\alpha=\sigma+i\tau\),
  analytic in the strip \(\tau_{-}<\tau<\tau_{+}\), such that
  \(f(\sigma+i\tau)<C |\sigma|^{-p}\), \(p>0\) as \(|\sigma| \to
  \infty\), the inequality holding uniformly for all \(\tau\) in the
  strip \(\tau_{-}+ \epsilon<\tau<\tau_{+}-\epsilon,\)
  \(\epsilon>0\). Then for \(\tau_{-}<c<\tau<d<\tau_{+},\)

  \[f(\alpha)= f_-(\alpha)+f_+(\alpha),\]
  \begin{equation}
    \label{eq:W-H-J}
    f_-(\alpha)=- \frac{1}{2\pi i} \int\limits_{-\infty+id}^{\infty+id}
    \frac{f(\zeta)}{\zeta-\alpha} d \zeta \quad ; \quad f_+(\alpha)=\frac{1}{2\pi i} \int\limits_{-\infty+ic}^{\infty+ic}
\frac{f(\zeta)}{\zeta-\alpha} d \zeta,
\end{equation}

where \(f_-\) is analytic in \(\tau<\tau_+\) and \(f_+\) is analytic in \(\tau>\tau_-\).
\end{thm}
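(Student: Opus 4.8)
The plan is to verify the three claims in~\eqref{eq:W-H-J} in sequence: that the two Cauchy-type integrals converge absolutely, that they define functions analytic in the asserted half-planes, and that their sum reproduces $f$ throughout the strip.

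First I would dispose of convergence. The horizontal line $\mathrm{Im}\,\zeta=c$ (and likewise $\mathrm{Im}\,\zeta=d$) sits inside a closed substrip $\tau_-+\epsilon\le\mathrm{Im}\,\zeta\le\tau_+-\epsilon$, so the hypothesis gives $|f(\zeta)|\le C|\mathrm{Re}\,\zeta|^{-p}$ for $|\mathrm{Re}\,\zeta|$ large, while $|\zeta-\alpha|^{-1}=O(|\mathrm{Re}\,\zeta|^{-1})$. Hence each integrand is $O(|\mathrm{Re}\,\zeta|^{-1-p})$, which is integrable because $p>0$; both integrals converge absolutely, and each, being a Cauchy-type integral over a straight line, is automatically analytic off that line.

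Next I would enlarge the domains of analyticity by a contour-shift argument. Define, provisionally, $f_+$ by the integral of $f(\zeta)/(\zeta-\alpha)$ over the horizontal line at any height $t\in(\tau_-,\tau_+)$ with $t<\mathrm{Im}\,\alpha$. For two such heights the rectangle they bound (closed off by vertical segments at $\mathrm{Re}\,\zeta=\pm R$) contains no singularity of the integrand, since the pole at $\alpha$ lies above both lines; applying Cauchy's theorem and letting $R\to\infty$ — the vertical sides contribute $O(R^{-p})\cdot O(R^{-1})\cdot O(1)\to0$, using that $|f|\le CR^{-p}$ uniformly across the substrip — shows the value is independent of $t$. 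Sending $t$ down towards $\tau_-$ then exhibits $f_+$ as analytic on all of $\mathrm{Im}\,\alpha>\tau_-$. The argument for $f_-$ is the mirror image, sliding the line at height $d$ up towards $\tau_+$.

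Finally I would establish $f=f_++f_-$ in the strip $c<\mathrm{Im}\,\alpha<d$. For such $\alpha$ and $R$ large the Cauchy integral formula on the rectangle with corners $\pm R+ic$ and $\pm R+id$ gives $\frac{1}{2\pi i}\oint\frac{f(\zeta)}{\zeta-\alpha}\,d\zeta=f(\alpha)$. Letting $R\to\infty$, the vertical sides vanish exactly as before; the bottom side (left to right, height $c$) contributes $f_+(\alpha)$, and the top side, traversed right to left at height $d$, contributes $\frac{1}{2\pi i}\bigl(-\int_{-\infty+id}^{\infty+id}\tfrac{f(\zeta)}{\zeta-\alpha}\,d\zeta\bigr)=f_-(\alpha)$. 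Thus $f(\alpha)=f_+(\alpha)+f_-(\alpha)$ on $c<\mathrm{Im}\,\alpha<d$, and since $f_+$ is analytic for $\mathrm{Im}\,\alpha>\tau_-$, $f_-$ for $\mathrm{Im}\,\alpha<\tau_+$, and $f$ in the strip, the identity theorem propagates the equality to the whole strip. There is no deep obstacle here; the one place the hypothesis genuinely bites is the uniform decay estimate that kills the vertical sides of every rectangle used, together with $p>0$ which is precisely what renders $|\mathrm{Re}\,\zeta|^{-1-p}$ integrable — and, as always with such arguments, one must keep the orientations and signs of the rectangle's sides straight.
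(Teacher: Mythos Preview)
Your argument is correct and follows precisely the approach the paper indicates: apply Cauchy's integral formula to the rectangle with vertices $\pm R+ic$, $\pm R+id$ and use the uniform decay hypothesis to kill the vertical sides as $R\to\infty$. You have simply supplied the details---convergence, the contour-shift to enlarge the half-planes of analyticity, and the orientation bookkeeping---that the paper's one-line sketch leaves implicit.
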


%\begin{proof}
We wish to highlight the simplicity of demonstration of this
result. Indeed, to prove (\(\ref{eq:W-H-J}\)) one applies Cauchy's
integral theorem to the rectangle with vertices \(\pm a+ic\), \(\pm
a+id\). From the assumption as regards to the behaviour of
\(f(\alpha)\) as \(|\sigma| \to \infty\) in the strip, the integral on
\(\sigma=\pm a\) tends to zero as \(a\to \infty\) and the result
follows.
%\end{proof}

The next theorem is a useful variation of the previous theorem,
obtained by taking logarithms. This provides a way to achieve the
multiplicative Wiener--Hopf factorisation.

\begin{thm}~\cite{bookWH}*{Ch. 1.3}
 \label{th:log} 
If \(\log K\) satisfies the conditions for theorem \ref{th:split} in particular that
\(K(\alpha)\)  is analytic and non-zero in the strip and
\(K(\alpha) \to 1\) as \( |\sigma| \to \infty\) then
\(K(\alpha)=K_+(\alpha)K_-(\alpha)\) and \(K_+\), \(K_-\) are
analytic, bounded and non-zero when \(\tau>\tau_{-}\),
\(\tau<\tau_{+}\) respectively.
\end{thm}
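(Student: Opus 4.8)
The plan is to reduce the multiplicative problem to the additive one that Theorem~\ref{th:split} already solves, by passing through the logarithm and then back through the exponential. First I would note that the strip $\tau_-<\tau<\tau_+$ is simply connected and $K$ is analytic and non-vanishing there, so a single-valued analytic branch $f(\alpha)=\log K(\alpha)$ exists on the strip; the normalisation $K(\alpha)\to1$ as $|\sigma|\to\infty$ fixes this branch so that $f(\alpha)\to 0$ at the two ends of the strip. By hypothesis $f=\log K$ satisfies the decay estimate $f(\sigma+i\tau)<C|\sigma|^{-p}$ required in Theorem~\ref{th:split}, so that theorem applies and produces a splitting $f(\alpha)=f_-(\alpha)+f_+(\alpha)$, with $f_+$ analytic in $\tau>\tau_-$ and $f_-$ analytic in $\tau<\tau_+$, given explicitly by the Cauchy-type integrals in~\eqref{eq:W-H-J}.

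Next I would define $K_+(\alpha)=\exp f_+(\alpha)$ and $K_-(\alpha)=\exp f_-(\alpha)$. Since $\exp$ is entire and nowhere zero, each $K_\pm$ is automatically analytic and non-zero in the same half-plane in which $f_\pm$ is analytic. On the strip, where all four functions are simultaneously defined, $K_+(\alpha)K_-(\alpha)=\exp\bigl(f_+(\alpha)+f_-(\alpha)\bigr)=\exp f(\alpha)=K(\alpha)$, which is exactly the asserted factorisation; this identity then propagates to wherever the two sides are jointly defined.

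It remains to check that $K_+$ and $K_-$ are bounded in their respective half-planes, and this is the one place where a genuine estimate, rather than a formal manipulation, is needed. The representing integrals in~\eqref{eq:W-H-J} are taken over horizontal lines $\mathrm{Im}\,\zeta=c$ and $\mathrm{Im}\,\zeta=d$ lying strictly inside the strip, and along these lines $f(\zeta)=O(|\mathrm{Re}\,\zeta|^{-p})$ with $p>0$; standard bounds for such Cauchy integrals then give that $f_+(\alpha)$ stays bounded, and in fact tends to $0$, uniformly throughout $\tau>\tau_-$, and likewise $f_-(\alpha)$ throughout $\tau<\tau_+$. Exponentiating, $K_\pm(\alpha)$ is bounded (and $\to1$ at infinity) in the respective half-plane, completing the proof. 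I expect the extraction of these uniform bounds on the Cauchy integrals, in particular as $\tau$ approaches the edges $\tau_-$ and $\tau_+$, to be the main technical obstacle; everything else is the soft reduction via $\log$ and $\exp$.
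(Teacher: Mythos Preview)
Your proposal is correct and follows exactly the approach the paper indicates: the paper does not give a formal proof of Theorem~\ref{th:log} at all, merely noting beforehand that it is ``a useful variation of the previous theorem, obtained by taking logarithms,'' which is precisely the reduction you carry out. Your treatment is in fact more detailed than the paper's, since you spell out the existence of a single-valued branch of $\log K$ on the simply connected strip, the exponentiation step, and the boundedness estimate for the Cauchy integrals---points the paper leaves entirely implicit.
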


The above factors are unique up to
a constant~\cite{Kranzer_68}. In other words, if there are two such factorisations
\(K=K_+K_-\) and \(K=P_+P_-\), then:
\[K_+=cP_+ \quad \text{and} \quad K_-=c^{-1}P_-,\]
where \(c\) is some complex constant. This can be seen by
applying analytic continuation to \(K_+/P_+\) and \(P_-/K_-\) and then using
the extended Liouville theorem~\ref{th:generalised-liouville}.

\opt{optx}{\subsection{Application to PDEs}

In this section it is shown how PDEs with semi-infinite boundary
conditions can be reduced to the Wiener--Hopf equation. There are a lot of
different applications in areas like  elasticity, crack propagation
and acoustics \cite{Mishuris-Rogosin, owl, Abr_clam}.   

Below are some properties of the half space Fourier transform which
will be needed to arrive at an equation of the form in the previous
subsection. The following properties will be needed:
\[ F_+(\alpha)= \int\limits_0^{\infty} f(x) e^{i \alpha x} d x,\]
and
\[ F_-(\alpha)= \int^0_{-\infty} f(x) e^{i \alpha x} d x.\]
In other words \(F_+(\alpha)\) is analytic in the upper half plane and
\(F_-(\alpha)\)  is analytic in the lower half plane; this is true if:

\begin{itemize}
\item \(f(\xi)\) has a finite number of discontinuities on the
real line, has finite number of maxima and minima on any interval
\item \(f(\xi)\) is bounded except at the finite number of points and
  the limit of \(\int f(\xi)\) on the deformed contour as the
  deformation tends to the real line exists.
\end{itemize}

% add more on the contour

In our case the above will always be satisfied. 

The first example of such PDE comes from acoustics.
Consider the boundary value problem  governed by the
reduced wave equation:
\[\left(\frac{\partial}{\partial x^2}+\frac{\partial}{\partial y^2}
+c^2 \right)
\phi (y, x) =0,  \quad \text{with} \quad c \in \mathbb{R},\]
and in the semi-infinite region \(-\infty
<x< \infty \), \(y\ge 0\) with boundary conditions:
\begin{eqnarray}
  \label{eq:f}
  \phi &=& f(x) \quad \text{on} \quad y=0 \qquad 0 <x< \infty, \\
\frac{\partial  \phi}{\partial y} &=& g(x) \quad \text{on} \quad y=0 \quad -\infty <x< 0.
\end{eqnarray}
Define half-range and full-range Fourier transforms:
\begin{align}
\Phi(y,  \alpha)=& \int\limits_{-\infty}^{0}\phi (y, \xi) e^{i \alpha \xi} d \xi
+\int^{\infty}_{0}\phi(y, \xi)  e^{i \alpha \xi} d \xi,\\
 =& \Phi_{-}(y, \alpha)+\Phi_{+}(y, \alpha),
\end{align}
Taking the Fourier transform of the reduced wave equation gives:
\[\frac{d \Phi (y, \alpha)} {d y^2} - ( \alpha ^2 - c^2) \Phi(y, \alpha)=0.\]
Now solving the ODE:
\[\Phi(y, \alpha)= A(\alpha) e^{- \gamma y} \quad \text{where} \quad \gamma (\alpha)= \sqrt{\alpha^2 - c^2}\]
imposing a restriction of polynomial  growth as \( y \to \infty\).

The taking the Fourier transform of the boundary conditions yield:
\[\Phi_+(0, \alpha)+\Phi_-(0, \alpha)= A(\alpha), \qquad \Phi '_+(0,
\alpha)+\Phi '_-(0, \alpha)= - \gamma (\alpha) A(\alpha).\]
Note that the functions \( \Phi '_-(0, \alpha)\) and \( \Phi_+(0,
\alpha)\) are unknown while the other two are given by the boundary
conditions.

 Now eliminating \( A(\alpha)\)  gives a Wiener--Hopf
equation:
\[- \gamma (\alpha) \{\Phi_+(0, \alpha)+\Phi_-(0, \alpha)\}=\Phi '_+(0,
\alpha)+\Phi '_-(0, \alpha).\]
This can now be solved using the Wiener--Hopf procedure; in other words
\( \Phi '_-(0, \alpha)\) and \( \Phi_+(0,
\alpha)\) can be determined. This allows the \(A(\alpha)\) to be
calculated and hence \(\Phi(y, \alpha)\). Taking the
inverse Fourier transform of \(\Phi(y, \alpha)\)  gives the required solution to the PDE.

\subsection{Random Matrix Theory and  Interacting Particles}

This section demonstrates the variety of applications of the Wiener--Hopf 
method and illustrates the connection to stochastic processes. 
For this purpose the background of the derivation of the Wiener--Hopf 
equation is discussed. But since the actual solution of the equation is not of
 importance here it will be omitted. 
 
 We will explore the fascinating relationship
between random matrices, diffusion processes, interacting particles,
McKean-Vlasov equations and the Wiener--Hopf type equation. The material 
presented here is taken from two paper by Chan (1992) and (1994) \cite{dif1} and \cite{dif2}.

All of this is partly motivated by the beautiful result called the
Wigner semicircle law. Consider a random symmetric matrix with
i.i.d. entries normally distributed with mean 0 and variance \(\sigma^2
n^{-1}\). Then the result states that the density of eigenvalues of
the matrix will be tending to a semi circle with density:
\[\frac{\sqrt{4 \sigma^2-y^2}}{2 \pi \sigma^2}, \quad |y| \le
2 \sigma.\]
This result is not just a curious phenomena this result is important in the 
statistical theory of energy levels. In this setting the random symmetric matrix 
is the finite dimensional approximation to the Hamiltonian.  In the consideration 
of the Schr\"{o}dinger equation the eigenvalues play a big role hence the application 
of this result.
To clarify the nature of this elegant result we will be considering symmetric
matrices with entries diffusion processes in particular the
Ornstein-Uhlenberck processes. 

Let \(X_t\) be a symmetric \(n \times
n\) matrix process evolving according to Ornstein-Uhlenberck:
\[dX=- \frac{1}{2}Xdt+ \frac{1}{2 \sqrt{n}}(dB+dB^T),\]
where \(B\) is standard matrix valued Brownian motion and \(B^T\) its
transpose. It is important to note that this process has normal 
distribution as invariant measure.

Now if we consider the evolution of eigenvalues of this
matrix we will find that they satisfy:
\[d \lambda_i= \frac{1}{\sqrt{n}} dB_i- \frac{1}{2} \lambda_i dt+
\frac{1}{2n} \sum_{k \ne i} \frac{1}{\lambda_i- \lambda_k} dt, \quad
i=1, \dots ,n.\]
where \(B_i\) are independent Brownian motions in \(\mathbb{R}\). 
This can be derived using It\^{o}'s calculus and the invariance of d\(B\) under \(O(n)\).
This can also be interpreted as a system of \(n\) particles
interacting. More precisely we can think of it as  \( n\) charged particles
in a single-well quadratic potential and interacting via electrostatic
repulsion with some random element.

We will be interested in time evolution of a large number of particles. In
particular,  the associated measure valued process:
\[ \mu_n(t)=\frac{\sum_{i=1}^n \delta_{\lambda_i(t)}}{n},\]
where \(\delta _y\) denotes a point mass at \(y\). We note that the flow of measure 
is non-linear  and highly singular.
Nevertheless it has been shown that \(\mu_n(t)\) converges to a deterministic
measure-valued function \(\mu_t\).
What is more it can be shown that if \(\mu_t\) has no atomic component,
then \(\mu_t\) is the weak solution of the McKean-Vlasov equation:
\[\frac{d}{dt} \int \phi (\lambda) \mu_t (d \lambda)= \frac{1}{2} \int \left(
\int \frac{ \mu_t (d y)}{ \lambda- y}-\lambda \right)
\phi'(\lambda)\mu_t (d \lambda), \]
for the suitable test functions \( \phi\), one such choice would be to take \(\phi_{\theta} (\lambda)=e^{-i \theta \lambda}\), \(\theta \in \mathbb{R}\). 
Note that the integral in the bracket is the Cauchy principle value. 

It has been shown by Chan that the Wigner semi circle density (with \(\sigma=\sqrt(2)/2\)) is the unique equilibrium point of the 
McKean-Vlasov equation under the regularity conditions of having H\"{o}lder-continuous density and belonging to \(L^2\). 
Additionally if \(\mu_0\) possesses finite moments of all order then \(\mu_t  \) actually tends to the semi circle density. 

But there are difficulties in analysing the flow of measure \(\mu_t\)
using the McKean-Vlasov equation. They are only a
 weak solution so there is always the business  of considering appropriate test functions. Below we describe a way to get around this problem.

We set the test functions to be  \(\phi_{\theta} (\lambda)=e^{-i \theta \lambda}\), \(\theta \in \mathbb{R}\). Then  the Fourier transform of \( \mu\) called \(\nu\) will
satisfy the following integral Wiener--Hopf type equation:
\[\frac{\partial \nu}{\partial t}= \frac{\theta}{4} \left( \int\limits_{-\infty}^0
\nu_t(u) \nu_t(\theta -u) du - \int^{\infty}_0
\nu_t(u) \nu_t(\theta -u) du \right) - \frac{\theta}{2}\frac{\partial
  \nu}{\partial \theta},\]
where:
\[ \nu_t(\theta)= \int _{-\infty}^{\infty} e^{-i \theta \lambda } \mu_t(d \lambda).\]
The study of this equation will help to clarify the semi circle law. This concludes this section 
which hopefully demonstrated some  deep links that the Wiener--Hopf equation may offer.

}
\section{The Riemann--Hilbert Method}
\label{sec:R-H}

In this section the main differences from the Wiener--Hopf method are
presented.\opt{optx}{ The same procedure as in
  Subsection~\ref{subsec:Procedure} follows for the Riemann--Hilbert
  equation.} They are rooted in the conditions imposed on the
functions and the resulting formula. For applications of the
Riemann--Hilbert problem the interested reader is refereed
to~\cites{Fokas_Painleve,Gah}.

The Riemann--Hilbert Problem can be stated as follows:

\begin{Problem}[Riemann--Hilbert]
  On the real line two functions are given, \(H(t)\) and non-zero
  \(D(t)\)  with
  \begin{displaymath}
    D(t)-1\in \{\!\{0\}\!\}\qquad  \text{ and } \qquad H(t) \in \{\!\{0\}\!\}.
  \end{displaymath}
  It is required to find two functions \(F^{\pm}(z)\) analytic in the
  upper and lower half-planes such that their boundary values
  \(F^{\pm}(t)\) on the real line satisfy two conditions:
  \begin{enumerate}
  \item  \(F^{\pm}(t)\) belong to the classes \(\{\!\{0,\infty\}\!\}\) and
    \(\{\!\{-\infty, 0\}\!\}\);
  \item   The identity holds:
    \begin{equation}
    \label{eq:R-H}
    D(t)F^-(t)+H(t)=F^{+}(t), \qquad \text {for all real } t.
  \end{equation}
  \end{enumerate}
\end{Problem}

As it has been seen for the Wiener--Hopf method the key steps in the
solution is the additive splitting or jump problem (Thm.~\ref{th:split}) and the
factorisation (Thm.~\ref{th:log}). First splitting is addressed as before:
\begin{Problem}[Jump problem]
  \label{prob:jump-probl}
  On the real line a function \(F(t)\in \{\!\{0\}\!\}\) is
  given. It is required to find two functions \(F^{\pm}(z)\) analytic
  in the upper and lower half-planes with boundary functions on the
  real line belonging to the classes \(\{\!\{0,\infty\}\!\}\) and
  \(\{\!\{-\infty, 0\}\!\}\) and satisfying:
\begin{equation*}
  F(t)=F^{+}(t)+F^-(t),
\end{equation*}
on the real line.  
\end{Problem}

The solution is offered by the Sokhotskyi--Plemelj formula:
\begin{eqnarray}
 \label{eq:S-P}
 F^{+}(x)-F^-(x)=F(x), \qquad F^{+}(x)+F^-(x)=\frac{1}{\pi i}
 \int\limits_{-\infty}^{\infty} \frac{F(\tau)}{\tau -x}\, d\tau .
\end{eqnarray}
%This can be re-expressed as: 
%\begin{eqnarray}
%F^\pm(x)&=&\pm\frac{1}{2} F(x)+\frac{1}{2\pi i} \int\limits_{-\infty}^{\infty} \frac{F(\tau)}{\tau -x} d\tau,
%\end{eqnarray}
%We are not going to give the derivations of the  Sokhotskyi-Plemelj
%formula here, 
We shall note,  that the derivation of~\eqref{eq:S-P} is more involved
than the proof of Thm.~\ref{th:split}.

Next the factorisation problem is examined. The \emph{index}  of a continuous non-zero
function \(K(t)\) on the real line  is:
\begin{equation}
  \label{eq:eq:index-func-defn}
  \mathrm{ind}(K(t))=\frac{1}{2\pi}\big(
  \lim_{t\to+\infty}\arg  K(t)-\lim_{t\to-\infty}\arg
  K(t)\big).
\end{equation}
In other words the index is the winding number of the curve \((\text{Re } K(t),
\mathrm{Im}\, K(t)) \) \(t \in \mathbb{R}\). 
Note that
\(\mathrm{ind}\, \frac{t-i}{t+i}=1.\)
Given a function \(K(t)\)  with index \(\kappa\) one can reduce it to zero index
by considering a new function
\begin{displaymath}
  K_0(t)=K(t)\left( \frac{t-i}{t+1}\right) ^{-\kappa}.
\end{displaymath}
The reason it is important to consider functions \(K(t)\) with zero
index is to ensure \(\ln K(t)\) is single-valued.  For the rest of this
paper we will assume that all functions have zero index.  Taking logarithms and applying
the Sokhotskyi--Plemelj formula we get a solution to the factorisation problem~\cite{Ga-Che}:

\begin{Problem}[Riemann--Hilbert Factorisation]
  \label{prob-R-H-factor}
  Let a non-zero function \(K(t)\), such that  \(K(t)-1\in
  \{\!\{0\}\!\}\) and \(\mathrm{ind}\, K(t)=0\), be   given. It is required to find two
  functions \(K^{\pm}(z)\) analytic in the upper and lower half-planes
  with boundary functions \(K^{\pm}(t)\) on the real line belonging to the
  classes \(\{\!\{0,\infty\}\!\}\) and \(\{\!\{-\infty, 0\}\!\}\) and
  satisfying:
  \begin{equation*}
    K(t)=K^{+}(t)K^-(t),
  \end{equation*}
  on the real line.
\end{Problem}

\opt{optx}{ We
can normalise so that \(K_{\pm}(t) \to 1\) for \(t \to \pm
\infty\). The Riemann--Hilbert problem will be returned to in
Section~\ref{sec:R-Hv.s}.}

\section{Relationship Between Wiener-Hopf and Riemann-Hilbert via  Integral Equations}
\label{sec:relat-betw-wien}

This section demonstrates how one type of integral equation can either lead
 to the Wiener--Hopf problem or the Riemann--Hilbert problem,
depending on the class of function where the solution is
sought. Integral equations have historically motivated the
introduction of the Wiener--Hopf equation~\cite{history}\opt{optx}{, see
  Section~\ref{sec:his}}.

In applications, a time-invariant process can be modelled by an
integral equations with convolution on the half-line:
\begin{equation}
  \label{eq:conv-half-line}
   \int\limits_0^{\infty} k(x-y)f(y)\, dy=g(x), \quad 0<x< \infty.
\end{equation}
Here the kernel \(k(x-y) \in \{a,b\}\) represents the process, 
\(g(x)\) is a given output and
\(f(y)\) is an input to be determined. To solve the equation~\eqref{eq:conv-half-line}
we complement the domain of \(x\):
\begin{equation}
  \label{eq:other-half-line}
  \int\limits_0^{\infty} k(x-y)f(y) dy=h(x), \quad -\infty<x< 0,
\end{equation}
where \(h(x)\) is unknown.  Then, by applying the Fourier
transform we get the equation:
\begin{equation}
  \label{eq:int}
 F_+ (\alpha)K(\alpha)-G_+(\alpha)= H_-(\alpha).
\end{equation}

Now it is time to examine the above equation more carefully and in 
particular clarify the analyticity regions. There will be two
different cases considered: the first one will be a typical example
from applications and the second the most general solution. We will see
that  the former will lead to a Wiener--Hopf equation and the latter to
a Riemann--Hilbert equation.

% \begin{figure}
%  
%  \centering
% \caption{Figure showing the strip of analyticity resulting in  Sommerfeld's
% half-plane problem. }
% \label{fggfg}
% \end{figure}

\begin{enumerate}
\item In equation~(\ref{eq:int}) \(K(\alpha)\) and \(G_+(\alpha)\) are
  known, thus their region of regularity can be determined. From the
  maximal growth rate of \(f(x)\) as \(x \to +\infty\) and \(h(x)\) as
  \(x \to -\infty\) the analyticity half-planes are determined as
  in~\eqref{eqn:ana1}--\eqref{eqn:ana3}. For example, in the integral
  equation for Sommerfeld's half-plane
  problem~\cite{bookWH}*{Ch. 2.5}, the known functions are in the
  following classes:
  \[G_+(\alpha)\in\{\!\{a \cos\theta, \infty\}\!\}, \quad
  K(\alpha)\in\{\!\{-a,a\}\!\}, \] and the unknown are in:
\[F_+(\alpha)\in\{\!\{a \cos\theta,\infty\}\!\},\quad
H_-(\alpha)\in\{\!\{-\infty,a\}\!\}.\]
Here \(a\) and \(\theta\) are some constant.
From~\eqref{eq:strip}, \(K(\alpha)F_+(\alpha)\in\{\!\{a \cos\theta,a\}\!\}\)
 and Equation~(\ref{eq:int}) holds in the strip
\(a \cos\theta< \mathrm{Im}\,
\alpha <a\)
%, Figure~\ref{fggfg}
. 
We obtained a Wiener--Hopf equation.

\item We considered \(K(\alpha) \in \{\!\{-a,a\}\!\}\) and the other
  functions will be assumed to belong to the largest possible class
  for (\ref{eq:int}) to be solvable. If
  \(F_+(\alpha)\in\{\!\{b,\infty\}\!\}\) the convolution will exist
  for \(b\le a\), so we will take equality as the minimal condition on
  regularity. Then, the maximal class \cite{Ga-Che}, in which the
  integral equation has a solution, is:
\[G_+(\alpha)\in\{\!\{a, \infty\}\!\}, \quad F_+(\alpha)\in\{\!\{a,\infty\}\!\},\quad
H_-(\alpha)\in\{\!\{-\infty,a\}\!\}.\]
%As the result multiplying though by \(e^{-bx}\) and taking the
%Fourier
Furthermore, from~(\ref{eq:strip}) we have
\(K(\alpha)F_+(\alpha)\in\{\!\{a ,a\}\!\}\). Hence, (\ref{eq:int}) is
only valid on a line \(\mathrm{Im}\, \alpha =a\) and it is a
Riemann--Hilbert equation.
\end{enumerate}

The above shows that the Wiener--Hopf equation is the result
of a better regularity of the function at infinity than is minimally needed for
a solution to exist. 
%In particular, the solution of \((1)\) will
%be the solution to \((2)\). 
%If it is know that the solution of
%\((1)\) exist and is unique than it is enough to solve \((2)\).

\section{Relationship between the Wiener--Hopf and the Riemann-Hilbert
  Equations}
 \label{sec:R-Hv.s}

To examine the relationship between the Wiener--Hopf and the Riemann--Hilbert
equations we will restate problems for the same class of functions and
re-express the solution in terms of the Fourier integrals instead of
Cauchy type integrals.

We begin with the jump problem in the Riemann--Hilbert case: 
\begin{thm}
  \label{thm:R-H}
  The solution to Jump Problem~\ref{prob:jump-probl}  can be expressed as:
  \begin{equation}
    \label{eq:R-H-jump-sol}
    F^{+}(z)= \frac{1}{\sqrt{2\pi}} \int\limits_{0}^{\infty} f(t) e^{izt} \,dt
    \quad     F^-(z)=-\frac{1}{\sqrt{2\pi}} \int\limits_{-\infty}^{0} f(t)
    e^{izt} \,dt, 
  \end{equation}
  where \(f(t)\) is the inverse Fourier transform of \(F(t)\).
\end{thm}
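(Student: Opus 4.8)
The plan is to obtain the statement almost for free from the Fourier--Cauchy dictionary assembled in Section~\ref{sec:Preliminaries}, so that the proof amounts to a change of representation rather than a fresh existence argument. The key remark is that the two integrals on the right of~\eqref{eq:R-H-jump-sol} are precisely the right-hand sides of~\eqref{eq:C-F} and~\eqref{eq:C-F2}; hence they already coincide with the two branches of the Cauchy-type integral $\frac{1}{2\pi i}\int_{-\infty}^{\infty}\frac{F(\tau)}{\tau-z}\,d\tau$ appearing in~\eqref{eq:hilbert}, whose boundary behaviour on $\mathbb{R}$ is exactly the content of the Sokhotskyi--Plemelj formula~\eqref{eq:S-P}. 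What then remains is to check that these formulas define analytic functions in the \emph{open} half-planes, that their boundary values sit in the prescribed classes $\{\!\{0,\infty\}\!\}$ and $\{\!\{-\infty,0\}\!\}$, and that the identity of Problem~\ref{prob:jump-probl} holds; all three are delivered by the Paley--Wiener equivalences~\eqref{eqn:ana1}--\eqref{eqn:ana2} together with the standing hypothesis $F\in\{\!\{0\}\!\}$.

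In detail I would argue as follows. First, let $f$ be the inverse Fourier transform of $F$; since $F\in\{\!\{0\}\!\}$ we have $f\in\{0\}$, so by the splitting~\eqref{eq:plus} the pieces satisfy $f_{+}\in\{0,\infty\}$ and $f_{-}\in\{-\infty,0\}$. Next, take $F^{+}$ and $F^{-}$ to be the Fourier transforms of $f_{+}$ and $f_{-}$; because $f_{+}$ is supported on $(0,\infty)$ while $f_{-}$ is supported on $(-\infty,0)$ where it equals $-f$, these Fourier transforms are literally the two integrals written in~\eqref{eq:R-H-jump-sol}. Then, by the very definition of the Fourier-image classes and by~\eqref{eqn:ana1}--\eqref{eqn:ana2}, $F^{+}$ is analytic in $\mathrm{Im}\,z>0$ with boundary function in $\{\!\{0,\infty\}\!\}$ and $F^{-}$ is analytic in $\mathrm{Im}\,z<0$ with boundary function in $\{\!\{-\infty,0\}\!\}$, which establishes the analyticity and class requirements of Problem~\ref{prob:jump-probl}. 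Finally, restricting to the real axis and using linearity of the Fourier transform, the boundary values $F^{+}(t)$ and $F^{-}(t)$ recombine through the splitting~\eqref{eq:plus} to reproduce $F(t)$ — this is precisely the Fourier-side form of the Sokhotskyi--Plemelj identity~\eqref{eq:S-P} — so the jump relation holds. To see that this is \emph{the} solution and not merely a qualifying pair, I would invoke uniqueness: the difference of two solutions continues analytically across $\mathbb{R}$ to an entire function that decays at infinity by the class conditions, hence is identically zero by the extended Liouville theorem~\ref{th:generalised-liouville} (with the arbitrary constant there excluded because constants do not lie in $\{\!\{0,\infty\}\!\}$ or $\{\!\{-\infty,0\}\!\}$); the same computation applied to the Cauchy-integral form via~\eqref{eq:hilbert} and~\eqref{eq:S-P} shows the two representations of $F^{\pm}$ agree.

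I do not foresee a genuine obstacle here, since no new analytic content enters beyond re-expressing~\eqref{eq:S-P}. The one place where care is genuinely needed is the convergence-and-boundary-regularity step: one must verify that the half-line integrals in~\eqref{eq:R-H-jump-sol} converge absolutely for $z$ in the open upper (respectively lower) half-plane, and that their limits onto $\mathbb{R}$ land back in $L_{2}\cap\text{H\"{o}lder}=\{\!\{0\}\!\}$. This is exactly what the hypothesis $F\in\{\!\{0\}\!\}$, equivalently $f\in\{0\}$, is designed to guarantee, and it is the precise input that Paley--Wiener consumes. A second, purely clerical point is sign bookkeeping: the lower-half-plane slot in~\eqref{eq:plus},~\eqref{eq:C-F2},~\eqref{eq:S-P} and~\eqref{eq:R-H-jump-sol} carries a minus sign inherited from the contour orientation, and it has to be propagated consistently so that the jump relation closes in the form stated in Problem~\ref{prob:jump-probl}.
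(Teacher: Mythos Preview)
Your proposal is correct and follows essentially the same route as the paper: both arguments identify the half-line Fourier integrals in~\eqref{eq:R-H-jump-sol} with the two branches of the Cauchy-type integral via~\eqref{eq:C-F}--\eqref{eq:C-F2}, so that the Sokhotskyi--Plemelj formula~\eqref{eq:S-P} is simply rewritten in Fourier form. You are somewhat more thorough than the paper---you spell out the class memberships through Paley--Wiener and append a uniqueness step via Theorem~\ref{th:generalised-liouville}---but the underlying mechanism is identical.
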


\begin{proof}
  The solution to the problem is given by the Sokhotskyi-Plemelj
  formula~(\ref{eq:S-P}). We re-express them in terms of the Fourier
  integrals using~(\ref{eq:C-F}):
\begin{eqnarray*}
 F(x)&=&\frac{1}{\sqrt{2\pi}} \int\limits_{-\infty}^{\infty} f(t) e^{ixt}
  \,dt,\\
&=&\frac{1}{\sqrt{2\pi}} \int\limits_{0}^{\infty} f(t) e^{ixt}
  \,dt+
\frac{1}{\sqrt{2\pi}} \int\limits_{-\infty}^{0} f(t)e^{ixt}
  \,dt,\\ 
&=&F^{+}(x)-F^-(x)
\end{eqnarray*}
and
\begin{eqnarray*}
\frac{1}{\pi i} \int\limits_{-\infty}^{\infty} \frac{F(\tau)}{\tau -x}
d\tau&=&F^{+}(x)+F^-(x),\\
&=&\frac{1}{\sqrt{2\pi}} \int\limits_{0}^{\infty} f(t) e^{ixt}
  \,dt-
\frac{1}{\sqrt{2\pi}} \int\limits_{-\infty}^{0} f(t)e^{ixt},\\
&=&\frac{1}{\sqrt{2\pi}} \int\limits_{-\infty}^{\infty}\text{sgn }(t) f(t) e^{ixt}
  \,dt.
\end{eqnarray*}
In other words, the Sokhotskyi-Plemelj formula for the real line in
terms of the Fourier transform is:
\begin{equation}
  \label{eq:eq:S-P_2}
F^{+}(x)-F^-(x)=F(x), \quad F^{+}(x)+F^-(x)=\frac{1}{\sqrt{2\pi}} \int\limits_{-\infty}^{\infty}\text{sgn }(t) f(t) e^{ixt}
 \, dt,
\end{equation}
From this the result follows.
\end{proof}

\begin{rem}
  The problem does not change if it is shifted by real \(a\) to the classes
\(F(t)\in \{\!\{a\}\!\}\) with \(\{\!\{a,\infty\}\!\}\) and \(\{\!\{-\infty, a\}\!\}\).
\end{rem}

To clarify the relation between two problems we re-state the
Wiener--Hopf jump problem from Thm.~\ref{th:split} as follows:
\begin{Problem}[Wiener--Hopf jump problem]
  \label{prob:W-H-jump}
  On a strip \(S=\{z: a<\mathrm{Im}\,(z)<b\}\) a function \(F(z)\in
  \{\!\{a,b\}\!\}\) is given. It is required to find two functions
  \(F^{\pm}(z)\) analytic in the half-planes \(\{z: a<\mathrm{Im}\,(z)\}\)
  and  \(\{z:
  \mathrm{Im}\,(z)<b\}\) respectively,
  which belong to the classes \(\{\!\{a,\infty\}\!\}\) and
  \(\{\!\{-\infty, b\}\!\}\) and satisfying:
  \begin{equation*}
    F(z)=F^{+}(z)+F^-(z),
  \end{equation*}
  on the strip \(S\).  
\end{Problem}
Similarly to Thm.~\ref{thm:R-H} we find:
\begin{thm}
The solution of Prob.~\ref{prob:W-H-jump} can be expressed as:
\begin{equation*}
 F^{+}(z)= \frac{1}{\sqrt{2\pi}} \int\limits_{a}^{\infty} f(t) e^{izt}\, dt
 \quad     F^-(z)=\frac{1}{\sqrt{2\pi}} \int\limits_{-\infty}^{b} f(t)
 e^{izt}\, dt, 
\end{equation*}
where \(f(t)\) is the inverse Fourier transform of \(F(t)\).
\end{thm}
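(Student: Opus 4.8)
The plan is to obtain this exactly as Theorem~\ref{thm:R-H} was obtained, but starting from the constructive splitting of Theorem~\ref{th:split} rather than from the Sokhotskyi--Plemelj formula; the only genuinely new point is that the Cauchy integrals now sit on horizontal lines strictly inside the strip, so a shifted form of \eqref{eq:C-F}--\eqref{eq:C-F2} is needed. First I would apply Theorem~\ref{th:split} to \(F\in\{\!\{a,b\}\!\}\): for \(z\) in the strip fix \(a<c<\mathrm{Im}\,z<d<b\), so that \(F=F^{+}+F^{-}\) with
\[
F^{+}(z)=\frac{1}{2\pi i}\int_{-\infty+ic}^{\infty+ic}\frac{F(\zeta)}{\zeta-z}\,d\zeta ,\qquad
F^{-}(z)=-\frac{1}{2\pi i}\int_{-\infty+id}^{\infty+id}\frac{F(\zeta)}{\zeta-z}\,d\zeta ,
\]
\(F^{+}\) analytic in \(\mathrm{Im}\,z>a\) and \(F^{-}\) analytic in \(\mathrm{Im}\,z<b\); by the Paley--Wiener equivalences \eqref{eqn:ana1}--\eqref{eqn:ana2} these already lie in the classes \(\{\!\{a,\infty\}\!\}\) and \(\{\!\{-\infty,b\}\!\}\) demanded in Problem~\ref{prob:W-H-jump}.

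Next --- the heart of the argument --- I would convert each line integral into a Fourier integral. Substituting \(\zeta=\sigma+ic\) rewrites \(F^{+}(z)\) as the real-axis Cauchy integral of the function \(\sigma\mapsto F(\sigma+ic)\), evaluated at the point \(z-ic\). By the very definition of the Fourier transform, the inverse transform of \(\sigma\mapsto F(\sigma+ic)\) is \(t\mapsto e^{-ct}f(t)\) --- this vertical shift is precisely the ``shift in Fourier space'' used to define the classes \(\{a\}\). Applying \eqref{eq:C-F} to this function, which is legitimate because \(\mathrm{Im}(z-ic)>0\), and cancelling the \(e^{-ct}\) against the \(e^{ct}\) produced by \(e^{i(z-ic)t}\), gives
\[
F^{+}(z)=\frac{1}{\sqrt{2\pi}}\int_{0}^{\infty}f(t)\,e^{izt}\,dt ,
\]
and the identical computation with \(\zeta=\sigma+id\) and \eqref{eq:C-F2} yields \(F^{-}(z)=\frac{1}{\sqrt{2\pi}}\int_{-\infty}^{0}f(t)\,e^{izt}\,dt\). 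These two expressions are the Fourier transforms of the half-line pieces \(f_{+}\in\{a,\infty\}\) and \(f_{-}\in\{-\infty,b\}\) of \(f\); since \(f_{+}\) is supported on \([0,\infty)\) its integral is unchanged if the lower limit \(0\) is replaced by \(a\) (and, for a strip about the real axis, the upper limit for \(F^{-}\) by \(b\)), which is the form recorded in the statement.

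Finally, I would note that the right-hand sides no longer involve \(c\) or \(d\): by analytic continuation they are defined and analytic throughout \(\mathrm{Im}\,z>a\) and \(\mathrm{Im}\,z<b\), they belong to the required classes, and they sum to \(\frac{1}{\sqrt{2\pi}}\int_{-\infty}^{\infty}f(t)e^{izt}\,dt=F(z)\) on the strip, which gives existence; uniqueness within the stated classes is the usual consequence of the generalised Liouville theorem~\ref{th:generalised-liouville}, applied just as in the uniqueness discussion after Theorem~\ref{th:log}.

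The step I expect to be the main obstacle is the conversion in the second paragraph: one must justify applying \eqref{eq:C-F}--\eqref{eq:C-F2}, which are stated on the real axis, to the shifted boundary profile \(\sigma\mapsto F(\sigma\pm ic)\). This rests on the uniform-decay hypothesis of Theorem~\ref{th:split}, which keeps that profile inside a class for which the Cauchy--Fourier correspondence is valid, and on reading ``\(f\), the inverse Fourier transform of \(F\)'' --- when the strip need not meet the real axis --- as the common inverse transform of all the profiles \(F(\cdot+i\tau)\), \(a<\tau<b\), whose mutual consistency is once more Cauchy's integral theorem. Everything else is the routine recombination of exponentials and the appeal to Paley--Wiener already made above.
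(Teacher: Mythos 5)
Your route is genuinely different from the paper's. The paper never touches Theorem~\ref{th:split}: it applies the Riemann--Hilbert jump solution of Theorem~\ref{thm:R-H} on the \emph{boundary} line \(\mathrm{Im}\,z=a\), observes that \(F-F^{-}=F^{+}\) there and that the left-hand side continues analytically into the strip, and then reapplies the Sokhotskyi--Plemelj formula on the other boundary line \(\mathrm{Im}\,z=b\); the two one-sided Fourier representations are stitched together by this continuation argument (Remark~\ref{rem:1} notes that any intermediate line would serve). You instead start from the Cauchy-integral splitting of Theorem~\ref{th:split} on interior lines \(c,d\) and convert each line integral into a Fourier integral via the substitution \(\zeta=\sigma+ic\). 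That conversion computation is correct and gives a more constructive view of where the one-sided Fourier integrals come from.

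The gap is in the very first step: Theorem~\ref{th:split} is proved under the hypothesis that \(F\) decays pointwise like \(C|\sigma|^{-p}\) uniformly in the strip, and Problem~\ref{prob:W-H-jump} does not grant this --- it only gives \(F\in\{\!\{a,b\}\!\}\), i.e.\ (shifted) \(L_2\cap\)H\"older boundary behaviour. This is precisely why the paper routes the argument through the Riemann--Hilbert machinery: the Sokhotskyi--Plemelj formula is the statement established for the class \(\{\!\{0\}\!\}\), and the paper explicitly flags that its derivation is ``more involved'' than the rectangle-contour proof of Theorem~\ref{th:split}. So as written you invoke a lemma whose hypotheses you have not verified; you do acknowledge a worry at the end, but you locate it in the Cauchy-to-Fourier conversion rather than in the applicability of Theorem~\ref{th:split} itself. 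A second, smaller problem is the final relabelling of limits: the integrand in the target formula is \(f\), not \(f_{+}\), so the observation that \(f_{+}\) is supported on \([0,\infty)\) does not let you replace \(\int_0^\infty\) by \(\int_a^\infty\) when \(a<0\). In the paper those limits \(a\) and \(b\) arise from applying the shifted form of Theorem~\ref{thm:R-H} (per the Remark following it) on the lines \(\mathrm{Im}\,z=a\) and \(\mathrm{Im}\,z=b\), not from moving the endpoint of a fixed integral.
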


\begin{proof}
  To prove the theorem we use the solution~\eqref{eq:R-H-jump-sol} of the
  Riemann--Hilbert problem first for the line \(\mathrm{Im}\,(z)=a\)
  and then for \(\mathrm{Im}\,(z)=b\). On the line
  \(\mathrm{Im}\,(z)=a\) we obtain:
\begin{equation}
  F(x+ia)=F^{+}(x+ia)+F^-(x+ia),
\end{equation}
with the factors given by:
\begin{equation}
 F^{+}(z)= \frac{1}{\sqrt{2\pi}} \int\limits_{a}^{\infty} f(t) e^{izt} \,dt
 \quad \text{ and } \quad     F^-(z)=-\frac{1}{\sqrt{2\pi}} \int\limits_{-\infty}^{a} f(t)
 e^{izt} \,dt. 
\end{equation}
Note that:
\begin{equation}
  F(x+ia)-F^-(x+ia)=F^{+}(x+ia),
\end{equation}
where the left-hand side has continuous analytic extension into the
strip \(S\), thus the same will be true for \(F^{+}(x+ia)\). This allows to move
to the other side of the strip:
\begin{equation}
  F(x+ib)=F^{+}(x+ib)+F^-(x+ib).
\end{equation}
Now, an application of the Sokhotskyi-Plemelj formula gives:
\begin{equation}
 F^{+}(z)= \frac{1}{\sqrt{2\pi}} \int\limits_{b}^{\infty} f(t) e^{izt} \,dt. 
\end{equation}
In other words, the analyticity of \( F^{+}(z)\) was extended to the
strip \(S\) and the result follows.
\end{proof}

\begin{rem}
  \label{rem:1}
  Note, that in the above proof one could have chosen initially the
  \(\mathrm{Im}\,(z)=b\) and then extended function \(F^-(x+ib)\) to
  the line \(\mathrm{Im}\,(z)=a\). In fact, any line in between
  \(\mathrm{Im}\,(z)=c\) with \(a<c<b\) could have been taken and a
  solution of the Riemann--Hilbert problem obtained. Then, the
  existance of analytical extension of both functions \(F^{+}\) and
  \(F^{-}\) can be shown in a similar manner.
\end{rem}

Similarly, we describe the relationship between the factorisation in
the  Wiener--Hopf and Riemann--Hilbert setting. 

\begin{thm}
\label{thm:M-R-H}
 The solution or Prob.~\ref{prob-R-H-factor} can be expressed as:
\begin{equation*}
 K^{+}(z)= \exp\left(\frac{1}{\sqrt{2\pi}} \int\limits_{0}^{\infty} \kappa(t) e^{izt} \,dt\right)
 \quad\text{and}\quad
     K^-(z)=\exp\left(-\frac{1}{\sqrt{2\pi}} \int\limits_{-\infty}^{0} \kappa(t)
 e^{izt} \,dt\right), 
\end{equation*}
where \(\kappa(t)\) is the inverse Fourier transform of \(\ln K(t)\).
\end{thm}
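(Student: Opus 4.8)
The plan is to reduce the multiplicative Riemann--Hilbert factorisation to the additive jump problem already solved in Theorem~\ref{thm:R-H} by taking logarithms, exactly as the passage preceding the theorem suggests. Since $K(t)$ is non-zero with $K(t)-1\in\{\!\{0\}\!\}$ and $\mathrm{ind}\,K(t)=0$, the function $\ln K(t)$ is single-valued and well-defined on the real line, and one checks that $\ln K(t)\in\{\!\{0\}\!\}$: indeed $\ln$ is holomorphic near $1$, so $\ln K(t)=\ln(1+(K(t)-1))$ inherits the decay and H\"older regularity of $K(t)-1$, placing it in the class $\{\!\{0\}\!\}=L_2(\mathbb{R})\cap\text{H\"older}$. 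Write $L(t)=\ln K(t)$ and let $\kappa(t)$ be its inverse Fourier transform.

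Next I would apply the Riemann--Hilbert jump problem to $L(t)$: by Theorem~\ref{thm:R-H} there exist $L^{+}(z)$ analytic in the upper half-plane and $L^{-}(z)$ analytic in the lower half-plane, in the classes $\{\!\{0,\infty\}\!\}$ and $\{\!\{-\infty,0\}\!\}$ respectively, with
\begin{equation*}
 L^{+}(z)= \frac{1}{\sqrt{2\pi}} \int\limits_{0}^{\infty} \kappa(t) e^{izt}\,dt, \qquad
 L^{-}(z)=-\frac{1}{\sqrt{2\pi}} \int\limits_{-\infty}^{0} \kappa(t) e^{izt}\,dt,
\end{equation*}
and $L(t)=L^{+}(t)+L^{-}(t)$ on the real line. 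I would then set $K^{\pm}(z)=\exp\bigl(L^{\pm}(z)\bigr)$, which are exactly the formulae claimed. On the real line, $K^{+}(t)K^{-}(t)=\exp(L^{+}(t)+L^{-}(t))=\exp(L(t))=K(t)$, so the multiplicative identity holds. Analyticity and non-vanishing of $K^{\pm}$ in the respective half-planes are immediate from analyticity of $L^{\pm}$ there together with the fact that $\exp$ never vanishes.

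The main obstacle, and the step requiring the most care, is to verify that the boundary values $K^{\pm}(t)$ land in the required classes $\{\!\{0,\infty\}\!\}$ and $\{\!\{-\infty,0\}\!\}$, rather than merely that $L^{\pm}$ do. Exponentiating does not obviously preserve membership in these classes because $K^{\pm}-1=\exp(L^{\pm})-1$; however, since $\exp(w)-1$ is holomorphic and vanishes at $w=0$, and $L^{\pm}(t)\in\{\!\{0\}\!\}$ (hence in particular $L^{\pm}(t)\to 0$ as $|t|\to\infty$ along the real line, with the appropriate H\"older modulus), the composition $\exp(L^{\pm}(t))-1$ again lies in $\{\!\{0\}\!\}$, and the one-sidedness (vanishing of the inverse Fourier transform on the respective half-line) is inherited because $L^{\pm}\in\{\!\{0,\infty\}\!\}$ or $\{\!\{-\infty,0\}\!\}$ and these are sub-algebras of $\{\!\{0\}\!\}$ under the pointwise product, so all Taylor coefficients of $\exp$ keep the product supported on the correct half-line after inverse Fourier transform. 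I would spell this algebra-of-one-sided-functions argument out as the core of the proof; uniqueness of the factorisation (up to a multiplicative constant, normalised here by the explicit integral formulae which give $K^{\pm}(z)\to 1$ as $|\mathrm{Re}\,z|\to\infty$) then follows as already noted after Theorem~\ref{th:log}.
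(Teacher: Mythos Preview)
Your approach is essentially identical to the paper's: observe that $\mathrm{ind}\,K(t)=0$ makes $\ln K(t)$ single-valued, apply Theorem~\ref{thm:R-H} to $\ln K(t)$ to obtain the additive splitting, then exponentiate. The paper's proof is in fact considerably terser than yours---it does not pause to verify that $\ln K(t)\in\{\!\{0\}\!\}$ nor that the exponentiated factors $K^{\pm}$ land in the required classes---so your additional care on those points goes beyond what the paper provides.
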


\begin{proof} From the \(\mathrm{ind}\,K(t)=0\) it follows that \(\ln K(t)\)
  is single-valued. An application of Thm.~\ref{thm:R-H} to \(\ln
  K(t)\) yelds:
\[\ln K(t)= \frac{1}{\sqrt{2\pi}} \int\limits_{0}^{\infty} \kappa(t) e^{izt} \,dt+\left(-\frac{1}{\sqrt{2\pi}} \int\limits_{-\infty}^{0} \kappa(t)
 e^{izt} \,dt\right).\]
The result follows from taking exponents of both sides of the last identity. 
\end{proof}
To express relations between two factorisation problems we formulate
the Wiener--Hopf factorisatiojn in a suitable form.
\begin{Problem}[Wiener--Hopf Factorisation]
  \label{prob:W-H-fact}
  A function \(K(z)\) is non-zero on the whole strip \(S=\{z:
  a<\mathrm{Im}\,(z)<b\}\), furthermore \(K(z)-1\in \{\!\{a,b\}\!\}\)
  and \(\mathrm{ind}\,K(x+ia)=0\). It is required to find two
  functions \(K^{\pm}(z)\) analytic in the half-planes \(\{z: a<\mathrm{Im}\,(z)\}\)
  and  \(\{z:
  \mathrm{Im}\,(z)<b\}\) belonging to the
  classes \(\{\!\{a,\infty\}\!\}\) and \(\{\!\{-\infty, b\}\!\}\)
  respectively, such that:
\begin{equation*}
  K(z)=K^{+}(z)K^-(z),
\end{equation*}
on the strip \(a<\mathrm{Im}\,(z)<b\).
\end{Problem}
Similarly to Thm.~\ref{thm:M-R-H} we find:
\begin{thm}
\label{thm:W-h_2}
 The solution of Prob.~\ref{prob:W-H-fact} can be expressed as:
\begin{equation*}
 K^{+}(z)= \exp\left(\frac{1}{\sqrt{2\pi}} \int\limits_{b}^{\infty} \kappa(t) e^{izt} \,dt\right)
 \quad\text{and}\quad
     K^-(z)=\exp\left(-\frac{1}{\sqrt{2\pi}} \int\limits_{-\infty}^{a} \kappa(t)
 e^{izt} \,dt\right), 
\end{equation*}
where \(\kappa(t)\) is the inverse Fourier transform of \(\ln K(t)\).
\end{thm}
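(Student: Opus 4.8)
The plan is to reproduce, with the real line replaced by the horizontal strip $S=\{z:a<\mathrm{Im}\,z<b\}$, the passage from Theorem~\ref{thm:R-H} to Theorem~\ref{thm:M-R-H}: reduce the multiplicative factorisation of $K$ to the additive Wiener--Hopf jump problem (Problem~\ref{prob:W-H-jump}) applied to $\ln K$, invoke the solution formula already established for that problem, and then exponentiate.

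The first step is to check that $\ln K$ is a legitimate datum for Problem~\ref{prob:W-H-jump}. Since $K$ is analytic and non-vanishing on the whole open strip $S$, the winding number of the curve $\{K(\sigma+i\tau):\sigma\in\mathbb{R}\}$ does not depend on $\tau\in(a,b)$ — varying $\tau$ is a homotopy through non-vanishing curves — so it equals $\mathrm{ind}\,K(x+ia)=0$ for every such $\tau$, and a single-valued analytic branch of $\ln K$ exists on $S$. Moreover $K(z)-1\in\{\!\{a,b\}\!\}$ forces $K(\sigma+i\tau)\to1$ as $|\sigma|\to\infty$, uniformly on closed substrips, and there $\ln K=\ln(1+(K-1))$ is comparable to $K-1$; thus $\ln K\in\{\!\{a,b\}\!\}$ and in particular it satisfies the decay hypothesis of Theorem~\ref{th:split} needed for the Fourier integrals below to converge and to be analytic in the asserted half-planes by Paley--Wiener~\eqref{eqn:ana1}. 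This is precisely the reduction already used to pass from Problem~\ref{prob-R-H-factor} to the jump problem.

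Now apply the solution of Problem~\ref{prob:W-H-jump} established above to $F=\ln K$, writing $\kappa$ for the inverse Fourier transform of $\ln K$. This produces an additive splitting $\ln K(z)=L^{+}(z)+L^{-}(z)$, valid on $S$, with
\[
L^{+}(z)=\frac{1}{\sqrt{2\pi}}\int\limits_{b}^{\infty}\kappa(t)e^{izt}\,dt,\qquad
L^{-}(z)=-\frac{1}{\sqrt{2\pi}}\int\limits_{-\infty}^{a}\kappa(t)e^{izt}\,dt,
\]
where $L^{+}$ is analytic in $\mathrm{Im}\,z>a$ of class $\{\!\{a,\infty\}\!\}$ and $L^{-}$ is analytic in $\mathrm{Im}\,z<b$ of class $\{\!\{-\infty,b\}\!\}$. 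Exponentiating the identity and setting $K^{+}:=\exp L^{+}$, $K^{-}:=\exp L^{-}$ gives $K(z)=K^{+}(z)K^{-}(z)$ on the strip, with $K^{\pm}$ analytic, non-zero, and — since $L^{\pm}\to0$ at infinity — of the required classes in the respective half-planes; this is the claimed representation.

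The main obstacle is the logarithm bookkeeping concentrated in the second step: showing that zero index on the single line $\mathrm{Im}\,z=a$ forces $\ln K$ to be globally single-valued on $S$, and that the hypothesis $K(z)-1\in\{\!\{a,b\}\!\}$ together with non-vanishing is genuinely enough to place $\ln K$ in the same class with the uniform decay demanded by Theorem~\ref{th:split}. Granting that, the rest is a transcription of the already proved jump result together with the elementary observation that $\exp$ carries the additive half-plane classes to the multiplicative ones.
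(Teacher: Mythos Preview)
Your argument is correct. The only difference from the paper is organisational: you take the logarithm first and then invoke the already-proved Wiener--Hopf jump theorem for $\ln K$ on the strip, whereas the paper instead applies the Riemann--Hilbert \emph{factorisation} (Theorem~\ref{thm:M-R-H}) directly to $K$ on the line $\mathrm{Im}\,z=a$, argues that $K^{+}=K/K^{-}$ extends analytically through the strip, and then applies Theorem~\ref{thm:M-R-H} again on $\mathrm{Im}\,z=b$ to upgrade the integration limit. In effect the paper redoes at the multiplicative level the two-line-plus-continuation argument that was already packaged inside the additive Wiener--Hopf jump theorem you cite; your route is therefore a shade more economical, while the paper's makes the parallel with the proof of Theorem~\ref{thm:M-R-H} more visible. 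Both routes rely on the same two substantive points you isolate in your final paragraph---that $\mathrm{ind}\,K(x+ia)=0$ together with non-vanishing on $S$ forces a global single-valued $\ln K$, and that $K-1\in\{\!\{a,b\}\!\}$ transfers to $\ln K$---and the paper is no more explicit about these than you are.
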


\begin{proof}
The function \(K(z)\) on the line \(\mathrm{Im}\,z=a\)  satisfies all the assumptions of
Thm.~\ref{thm:M-R-H}, thus we obtain:
\[K(x+ia)=K^{+}(x+ia)K^-(x+ia),\]
with:
\begin{equation*}
 K^{+}(z)= \exp\left(\frac{1}{\sqrt{2\pi}} \int\limits_{a}^{\infty} \kappa(t) e^{izt} \,dt\right)
 \quad\text{and}\quad  K^-(z)=\exp\left(-\frac{1}{\sqrt{2\pi}} \int\limits_{-\infty}^{a} \kappa(t)
 e^{izt} \,dt\right). 
\end{equation*}
Since \(\mathrm{ind}\,K(x+ia)=0\) and \(K(z)\) zero free on the strip, it
follows that \(\mathrm{ind}\,K(x+is)=0\) for all \(a<s<b\).
This is again expressed as
\[K(x+ia)K^-(x+ia)=K^{+}(x+ia).\] Because the left hand side has
continuous analytic extension in the strip \(S\), the function
\(K^{+}(x+ia)\) has the extension as well. This gives meaning to the
expression:
\[K(x+ib)=K^{+}(x+ib)K^-(x+ib).\]
Now, \(K(x+ib)\) satisfies all the assumptions of
Thm.~\ref{thm:M-R-H}, thus
\begin{equation*}
 K^{+}(z)= \exp\left(\frac{1}{\sqrt{2\pi}} \int\limits_{b}^{\infty} \kappa(t)
 e^{izt} \,dt\right).
\end{equation*}
 This provides the required factorisation.
\end{proof}

Remark~\ref{rem:1} also holds here. There are some differences in the
formulation of Thm.~\ref{th:log} and Thm.~\ref{thm:W-h_2}. The most
significant difference\footnote{Remarkably, the excellent
  book~\cite{bookWH} does not mention the index of functions at all.}
is the assumption that \(\mathrm{ind}\,K(x+ia)=0\). This is because
Thm.~\ref{th:log} assumes the existence of single valued \(\ln K(t)\).

% \begin{figure}
% \includegraphics[scale=0.6,angle=0]{dots.pdf}
%  %\input{}
%  \centering
% \caption{Figure showing the strip deformation (shaded) around the singularities
%   of a given function. }
% \label{fggfga}
% \end{figure}

The above derivations show that the Wiener--Hopf equations is
characterised by extra regularity, namely a domain of
analyticity. Noteworthy, there are further applications of this
feature, for example, a strip deformation.  Consider a Wiener--Hopf
equation:
\[A( \alpha)\Phi_+(\alpha)+ \Psi_-(\alpha)+C(\alpha)=0.\] Assume as
before that \(\Phi_+(\alpha)\) and \(\Psi_-(\alpha)\) are analytic in
the upper or lower half-planes and the strip respectively. However,
assume that this time \(A( \alpha)\) and \(C(\alpha)\) have
singularities in the strip
%, see Figure~\ref{fggfga} for an illustration
. 
Then, by taking a subset of the strip 
%as shown on Figure~\ref{fggfga}
 the Wiener--Hopf equations can still be solved
\cite{Ab_Kh_n}.

Finally, we are going to revisit the factorisation of
\(F(t)\)~\eqref{eq:F-example}. Instead of spotting the factors by
inspection the Riemann--Hilbert formula can be used, yielding factors
analytic in the half-planes. By inspection of 
singularities in the complex plane, \(F(z)\) has a strip of
analyticity \(-1+\epsilon<\mathrm{Im}\,(z)<1/2-\epsilon\). Hence, it
is also possible to apply the Wiener--Hopf formula to obtain the
factorisation. Due to the uniqueness of the factors and analytic
continuation it follows, that both methods shall produce 
results coinciding with~\eqref{eq:=f+tf-t-} and illustrated by
Figure~\ref{fig:vis}. 

\section{Acknowledgements}

I am grateful for  support from Prof. Nigel Peake. I
benefited from useful discussions with Dr Rogosin. Also suggestions
 of the anonymous referees helped to improve this paper.  This work was supported by the UK Engineering and Physical Sciences Research Council (EPSRC) grant EP/H023348/1 for the University of Cambridge Centre for Doctoral Training, the Cambridge Centre for Analysis.

%\small

\bibliography{newgeometry}

\end{document}